\documentclass{lmcs} 
\pdfoutput=1

\usepackage{lastpage}
\lmcsdoi{16}{4}{18}
\lmcsheading{}{\pageref{LastPage}}{}{}%
{Jan.~22,~2020}{Dec.~18,~2020}{}

\usepackage[utf8]{inputenc}

\keywords{well-filtered space, $\omega$-well-filtered space, sober space, core-compact space, locally compact space, open well-filtered space}


\usepackage{lineno,hyperref}
\usepackage{amssymb}
\usepackage{amsmath}
\usepackage{amsthm}

\usepackage[matrix,arrow]{xy}
\theoremstyle{plain} 

\newcommand{\ua}{\mathord{\uparrow}}
\newcommand{\da}{\mathord{\downarrow}}

\newcommand{\cl}{{\rm cl}}

\begin{document}
	
	\title[Open well-filtered spaces]{On open well-filtered spaces}
	
	\author[C.~Shen]{Chong Shen}	
	\address{ School of Mathematical Sciences,	Nanjing Normal University, Jiangsu, Nanjing,
		China}	
	\email{shenchong0520@163.com}  
	\thanks{The first author is sponsored by NSFC (11871097)  and the China Scolarship Council (File No. 201806030073)}	
	
	\author[X.~Xi]{Xiaoyong Xi}	
	\address{School of Mathematics and Statistics, Jiangsu Normal University, Jiangsu, Xuzhou, China}	
	\email{littlebrook@jsnu.edu.cn}  
	\thanks{The second author is sponsored by NSFC (1207188).}	
	
	\author[X.~Xu]{Xiaoquan Xu}	
	\address{School of Mathematics and Statistics, Minnan Normal University, Fujian, Zhangzhou, China}	
	\email{xiqxu2002@163.com}
	\thanks{The third author is sponsored by NSFC (11661057, 1207199), the Ganpo 555 project for leading talent of Jiangxi Provence and the Natural Science Foundation of Jiangxi Province, China (20192ACBL20045).}	
	
	\author[D.~Zhao]{Dongsheng Zhao}	
	\address{Mathematics and Mathematics Education, National Institute of Education,
		Nanyang Technological University,  1 Nanyang Walk, Singapore}	
	\email{dongsheng.zhao@nie.edu.sg}
	
	
	
	
	\begin{abstract}
		\noindent We introduce and study a new class of $T_0$ spaces, called open well-filtered spaces. The main results we prove include (i) every well-filtered space is an open well-filtered space; (ii) every core-compact open well-filtered  space is sober. As an immediate corollary, it follows that every core-compact well-filtered space is sober. This provides a different and relatively more straightforward method to answer the problem posed by Jia and Jung: is every core-compact well-filtered space sober?
	\end{abstract}
	
	\maketitle
	
	\section{Introduction}\label{S:one}
	
	The sobriety is one of the most important topological properties, particularly meaningful for $T_0$ spaces. It has been  used in the characterization of spectral spaces of commutative rings and the spaces  which are determined by their open set lattices.
	In domain theory, it was proved that the Scott space of every domain is sober  quite early on. Since then the investigation of the sobriety of Scott spaces of general directed complete posets led to many deep results. Heckmann introduced the well-filtered spaces and asked whether every well-filtered Scott space of a directed complete poset is sober \cite{Heckmann1990,Heckmann1992}.
	This question inspired intensive studies on the relationship between sobriety and  well-filteredness (see \cite{HGJX, Jia-Jung-2016,kou,zhao-xi-chen,xi-zhao-MSCS-well-filtered,Xi-Lawson-2017,wu-xi-xu-zhao-19}). A recent problem on this topic is whether every core-compact well-filtered space is sober,  posed by Jia and Jung \cite{Jiathesis}. The problem has been answered positively by Lawson, Wu and Xi \cite{LawsonXi2019} and Xu, Shen, Xi and Zhao \cite{Xu-Shen-Xi-Zhao2020}.

	In the current paper we first introduce a new class of topological spaces, called open well-filtered spaces,  which includes all well-filtered spaces. The open well-filtered spaces themselves may  deserve further study that will enrich the theory of $T_0$ topological spaces. We prove that
	(i) every well-filtered space is an open well-filtered space, and  (ii) every core-compact open well-filtered  space is sober. As an immediate implication, we obtain that every core-compact well-filtered space is sober, thus giving a relatively more straightforward
	method to answer Jia and Jung's problem \cite{Jiathesis}.

\section{Preliminaries}\label{S:two}
	
	This section is devoted to a brief review of some basic concepts and notations that will be used in the paper. For more details, see \cite{Engelking,redbook,Jean-2013}.
	
	Let $P$ be a poset.
	A nonempty subset $D$ of $P$ is \emph{directed} if every two
	elements in $D$ have an upper bound in $D$.  $P$ is called a
	\emph{directed complete poset}, or \emph{dcpo} for short, if for any
	directed subset $D\subseteq P$, $\bigvee D$ exists.

	Let $X$ be a $T_0$ space. A subset $A$ of $X$ is called \emph{saturated} if $A$ equals the intersection of all open sets containing it. The specialization order $\leq $ on $X$  is defined by $x\leq y$ if{}f $x\in \cl(\{y\})$, where $\cl$ is the closure operator.
	It is easy to show that a subset $A$ of  $X$ is saturated if and only if $A=\ua A=\{x\in X: x\geq a\text{ for some }a\in A\}$ with respect to the specialization order.

	A nonempty subset $A$ of $X$ is \emph{irreducible} if for any closed sets $F_1, F_2$ of $X$, $A\subseteq F_1\cup F_2$ implies $A\subseteq F_1$ or $A\subseteq F_2$.
	A $T_0$ space $X$ is called \emph{sober} if for any irreducible closed set $F$, $F=\da x=\cl(\{x\})$ for some $x\in X$.

	For a $T_0$ space $X$, we shall consider the following subfamilies of the power set $2^X$:
	
	$\begin{array}{ll}
	\mathcal Q(X), & \text{the set of all compact saturated subsets of }X;\\
	\mathcal S(X),  & \text{the set of all saturated subsets of } X;\\
	\mathcal O(X), & \text{the set of all open subsets of }X.
	\end{array}$

	For $A,B\subseteq X$, we say that \emph{$A$  is relatively compact in $B$}, denoted by $A\ll B$,  if $A\subseteq B$ and every open cover of $B$ contains a finite subcover of $A$.

	  We write
	$$\mathcal A\subseteq_{flt} 2^X\ \ (\mathcal Q(X), \mathcal S(X), \mathcal O(X), \text{ resp.})$$
	for that $\mathcal A$ is a $\ll$-filtered subfamily of $2^X$  ($\mathcal Q(X)$, $\mathcal S(X)$,  $\mathcal O(X)$,  resp.), that is, $\forall A_1,A_2\in\mathcal A$, there exists $A_3\in\mathcal A$ such that $A_3\ll  A_1,  A_2$.
	
		\begin{rem}(1) In general, for any $A,B\subseteq X$, that each open cover of $B$ contains a finite subcover of $A$ does not  imply $A\subseteq B$. For example, on the set $X=\{x,y\}$, consider the topology $\mathcal O(X)=\{\emptyset, \{x\},X\}$. Then $X$ is a $T_0$ space and $\{x\}\ll\{y\}$.
		Thus the requirement $A\subseteq B$ in the definition of $A\ll B$ is  not redundant.
		
		(2) For any $A\subseteq X$ and $B\in\mathcal O(X)$,  $A\ll B$ if and only if each open cover of $B$ contains a finite subcover of $A$. This is because
		$\{B\}$ is an open cover of $A$.
	
	(3) For any   $A\subseteq X$ and $Q\in\mathcal Q(X)$,   $A\ll Q$ if and only if $A\subseteq Q$. Hence, $\mathcal A\subseteq_{flt}\mathcal Q(X)$ if and only if $(\mathcal A,\supseteq)$ is a directed family.
	\end{rem}

	A $T_0$  space $X$ is called \emph{well-filtered}  if for any
	$\mathcal K\subseteq_{flt}\mathcal Q(X)$ and $U\in\mathcal O(X)$, $\bigcap\mathcal{K}\subseteq U$ implies $K\subseteq U$ for some $K\in\mathcal{K}$.
	We note that every sober space is well-filtered \cite{redbook}.
	
	In what follows, the symbol $\omega$ will denote  the smallest infinite ordinal, and for any set $X$,  the family of all finite subsets of $X$ is denoted by $X^{(<\omega)}$.
	
	\begin{defi}
		A $T_0$ space $X$ is called \emph{$\omega$-well-filtered}, if for any $\{K_n:n<\omega\}\subseteq_{flt} \mathcal Q(X)$ and  $U\in\mathcal O(X)$,
		$$\bigcap_{n<\omega}K_n\subseteq U \ \Rightarrow \  \exists n_0<\omega, K_{n_0}\subseteq U.$$
	\end{defi}
	
	\begin{prop}\label{chain}
		A $T_0$ space $X$ is $\omega$-well-filtered if and only if for any  descending chain $\{K_n:n<\omega\}\subseteq\mathcal Q(X)$, that is,
		$$K_0\supseteq K_1\supseteq K_2\supseteq\ldots\supseteq K_n\supseteq K_{n+1}\supseteq\ldots,$$
		and $U\in\mathcal O(X)$,
		$$\bigcap_{n<\omega}K_n\subseteq U\ \Rightarrow\ \exists n_0<\omega, \ K_{n_0}\subseteq U.$$
	\end{prop}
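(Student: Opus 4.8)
The plan is to prove the two implications separately, observing that one is immediate and concentrating all the work in the other.

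First I would dispose of the direction asserting that an $\omega$-well-filtered space satisfies the chain condition. This needs no real argument: a descending chain $K_0\supseteq K_1\supseteq K_2\supseteq\cdots$ in $\mathcal Q(X)$ is in particular directed under reverse inclusion, and by the Remark (item (3)) this is exactly the statement that $\{K_n:n<\omega\}$ is a $\ll$-filtered subfamily of $\mathcal Q(X)$. Hence the defining property of $\omega$-well-filteredness applies verbatim to the chain and yields the desired implication.

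The substance is the converse. Assume the chain condition and let $\{K_n:n<\omega\}\subseteq_{flt}\mathcal Q(X)$ and $U\in\mathcal O(X)$ satisfy $\bigcap_{n<\omega}K_n\subseteq U$; I must produce $n_0$ with $K_{n_0}\subseteq U$. By the Remark again, being $\ll$-filtered here simply means that $\{K_n:n<\omega\}$ is directed under $\supseteq$. The idea is to manufacture from this directed family an honest descending chain having the same intersection, to which the hypothesis can then be applied. I would build such a chain by recursion: put $i_0=0$, and having chosen $i_k$, use directedness under $\supseteq$ to select an index $i_{k+1}$ with $K_{i_{k+1}}\subseteq K_{i_k}$ and $K_{i_{k+1}}\subseteq K_{k+1}$. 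Setting $Q_k=K_{i_k}$ then gives a descending chain $Q_0\supseteq Q_1\supseteq Q_2\supseteq\cdots$ in $\mathcal Q(X)$.

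It remains to check that this chain captures the full intersection and to conclude. On one hand each $Q_k$ is one of the $K_{i_k}$, so $\bigcap_{n<\omega}K_n\subseteq\bigcap_k Q_k$; on the other hand the second clause $Q_{k+1}\subseteq K_{k+1}$ forces $Q_m\subseteq K_m$ for every $m$, whence $\bigcap_k Q_k\subseteq\bigcap_{n<\omega}K_n$. Thus $\bigcap_k Q_k=\bigcap_{n<\omega}K_n\subseteq U$, and the chain condition delivers some $k_0$ with $Q_{k_0}\subseteq U$. Since $Q_{k_0}=K_{i_{k_0}}$ lies in the original family, $n_0=i_{k_0}$ works. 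The one point deserving care---the hard part, such as it is---is the recursive extraction: one wants a subsequence that is simultaneously descending and cofinal in the directed family, so that no part of the intersection is lost, and the two conditions imposed at each step are precisely what guarantee both. Notably, this cofinal-chain trick sidesteps the temptation to take finite intersections $K_0\cap\cdots\cap K_n$, which need not be compact saturated.
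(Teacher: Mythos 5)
Your proof is correct and follows essentially the same route as the paper: both reduce the directed countable family to a recursively extracted descending subchain $\widehat K_0\supseteq\widehat K_1\supseteq\cdots$ with $\widehat K_n\subseteq K_n$, so that the intersections agree and the chain hypothesis applies to a member of the original family. The only cosmetic difference is that the paper treats the case of a finite family separately (where a smallest element exists) before enumerating, whereas your recursion handles both cases uniformly.
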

	
	\begin{proof}
		We only need to prove the sufficiency. Let $\mathcal K\subseteq_{flt}\mathcal Q(X)$ be a countable  family and $U\in\mathcal O(X)$ such that $\bigcap\mathcal K\subseteq U$.
		
		If the cardinality $|\mathcal K|<\omega$, i.e., $\mathcal K$ is a finite family,  then $\mathcal K$ contains a smallest element $Q$, and hence $Q=\bigcap\mathcal K\subseteq U$, completing the proof.

		Now assume $|\mathcal K|=\omega$. We may let $\mathcal K=\{K_n:n<\omega\}$. We  use induction on $n<\omega$ to  define a descending chain $\widehat{\mathcal K}=\left\{\widehat{K}_n:n<\omega\right\}$.
		Specifically, let $\widehat{K}_0=K_0$ and let $\widehat{K}_{n+1}\in\mathcal K$ be a lower bound of $\left\{K_{n+1},\widehat{K}_0, \widehat{K}_1,\widehat{K}_2\ldots,\widehat{K}_n\right\}$  under the inclusion order. Then $\widehat{\mathcal K}\subseteq \mathcal K$ is a descending chain and  $\widehat{K}_n\subseteq K_n$ for all $n<\omega$,  implying that  $\bigcap\widehat{\mathcal K}=\bigcap\mathcal K\subseteq U$.
		Then by assumption, there exists $n_0<\omega$ such that $\widehat{K}_{n_0}\subseteq U$, completing the proof.
	\end{proof}

	\begin{lem}\label{llrudin}
		Let $X$ be a $T_0$ space and $\mathcal A\subseteq_{flt} 2^X$.  Each  closed set $C\subseteq X$ that intersects all members of $\mathcal A$ contains a minimal (irreducible) closed subset $F$ that still intersects all members of  $\mathcal A$.
	\end{lem}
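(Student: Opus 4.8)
The plan is to produce $F$ by a standard Zorn's Lemma argument and then derive its irreducibility from its minimality. Let $\mathcal{F}$ denote the collection of all closed subsets $D$ of $C$ that intersect every member of $\mathcal{A}$, partially ordered by reverse inclusion $\supseteq$. Since $C$ itself lies in $\mathcal{F}$ by hypothesis, this family is nonempty, and a maximal element of $(\mathcal{F},\supseteq)$ is precisely a closed subset of $C$ that is minimal (under $\subseteq$) among those meeting all members of $\mathcal{A}$. So it suffices to verify the hypotheses of Zorn's Lemma and then check irreducibility.

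First I would show that every chain in $(\mathcal{F},\supseteq)$ has an upper bound. Given a family $\{D_i : i \in I\}\subseteq\mathcal{F}$ totally ordered by inclusion, set $D=\bigcap_{i\in I}D_i$, which is again closed; the task is to prove $D\cap A\neq\emptyset$ for every $A\in\mathcal{A}$. This is the main obstacle, and it is exactly where the $\ll$-filteredness of $\mathcal{A}$ is used. Fixing $A\in\mathcal{A}$ and supposing toward a contradiction that $D\cap A=\emptyset$, the open sets $\{X\setminus D_i : i\in I\}$ cover $A$. I would then choose $B\in\mathcal{A}$ with $B\ll A$, so that $B\subseteq A$ and every open cover of $A$ admits a finite subcover of $B$; this yields indices $i_1,\dots,i_n$ with $B\subseteq\bigcup_{k=1}^{n}(X\setminus D_{i_k})$. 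As the $D_i$ form a chain, the finite collection $D_{i_1},\dots,D_{i_n}$ has a smallest member $D_{i_0}$, whence $\bigcup_{k=1}^{n}(X\setminus D_{i_k})=X\setminus D_{i_0}$ and therefore $B\cap D_{i_0}=\emptyset$, contradicting $D_{i_0}\in\mathcal{F}$. Thus $D\in\mathcal{F}$ is the required upper bound, and Zorn's Lemma supplies a minimal element $F$.

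Finally I would verify that such a minimal $F$ is irreducible. Suppose $F\subseteq F_1\cup F_2$ with $F_1,F_2$ closed and $F\not\subseteq F_1$, $F\not\subseteq F_2$. Then $F\cap F_1$ and $F\cap F_2$ are closed proper subsets of $F$, so by minimality of $F$ neither belongs to $\mathcal{F}$; hence there are $A_1,A_2\in\mathcal{A}$ with $(F\cap F_1)\cap A_1=\emptyset=(F\cap F_2)\cap A_2$. Using filteredness once more, I would pick $A_3\in\mathcal{A}$ with $A_3\ll A_1,A_2$, so $A_3\subseteq A_1\cap A_2$. Since $F\in\mathcal{F}$, there is a point $x\in F\cap A_3$, and $x\in F\subseteq F_1\cup F_2$ forces $x\in(F\cap F_1)\cap A_1$ or $x\in(F\cap F_2)\cap A_2$, each impossible. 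Therefore $F$ is irreducible, completing the proof.
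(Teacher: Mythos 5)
Your proof is correct and follows essentially the same route as the paper's: a Zorn's Lemma argument on the family of closed sets meeting all members of $\mathcal A$, with $\ll$-filteredness used once to show chain intersections stay in the family and once to establish irreducibility of the minimal element. The only cosmetic difference is that you extract an explicit finite subcover and take the smallest $D_{i_k}$, where the paper invokes the directedness of the open cover $\{X\setminus B : B\in\mathcal H\}$ directly.
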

	\begin{proof}
		Let $\mathcal B:=\{B\in\mathcal C(X): \forall A\in\mathcal A, B\cap A\neq\emptyset\}$, where $\mathcal C(X)$ is the set of all closed subsets of $X$.
		
		(i) $\mathcal B\neq\emptyset$ because $C\in \mathcal B$.
		
		(ii) Let $\mathcal H\subseteq \mathcal B$ be a chain. We claim that $\bigcap\mathcal H\in\mathcal B$. Otherwise, there exists $A_0\in\mathcal A$ such that $A_0\cap\bigcap\mathcal H=\emptyset$. As $\mathcal A$ is $\ll$-filtered, there exists $A_1\in\mathcal A$ such that $A_1\ll A_0$. Since $\{X\setminus B: B\in\mathcal H\}$ is a directed open cover of $A_0$, there exists $B_0\in\mathcal H$ such that $A_1\subseteq X\setminus B_0$ implying $A_1\cap B_0=\emptyset$. This means that $B_0\notin\mathcal B$, contradicting  $B_0\in\mathcal H\subseteq\mathcal B$.
		
		By Zorn's Lemma, it turns out that there exists a minimal closed subset $F\subseteq C$  such that $F\cap A\neq\emptyset$ for all $A\in\mathcal A$.
		
		Now we show that $F$ is irreducible. If $F$ is not irreducible, then there exist closed sets $F_1, F_2$ such that $F=F_1\cup F_2$ but $F\neq F_1$ and $F\neq F_2$.
		Since $F_1, F_2$ are proper subsets of $F$,   by the minimality of  $F$, there exist $A_1,A_2\in\mathcal A$ such that $F_1\cap A_1=\emptyset$ and $F_2\cap A_2=\emptyset$. Since $\mathcal A$ is a $\ll$-filtered family, there exists $A_3\in\mathcal A$ such that $A_3\ll A_1, A_2$ (hence $A_3\subseteq A_1, A_2$). It then follows that $F_1\cap A_3\subseteq F_1\cap A_1=\emptyset$ and $F_2\cap A_3\subseteq F_2\cap A_2=\emptyset$, which implies that $F_1\cap A_3=F_2\cap A_3=\emptyset$. Thus  $F\cap A_3=(F_1\cup F_2)\cap A_3=(F_1\cap A_3)\cup (F_2\cap A_3)=\emptyset$, contradicting the assumption on $F$. Therefore,  $F$ is irreducible.
	\end{proof}

\section{Saturated well-filtered spaces}\label{S:three}

In this section, we show that well-filteredness  can be characterized by means of saturated sets, instead of compact saturated sets.

\begin{defi}
	A $T_0$ space $X$ is called \emph{saturated well-filtered}, if for any  $\{A_i:i\in I\}\subseteq_{flt} \mathcal S(X)$ and $U\in\mathcal O(X)$,
	$$\bigcap_{i\in I}A_i\subseteq U \ \Rightarrow \  \exists i_0\in I, A_{i_0}\subseteq U.$$
\end{defi}

\begin{defi}
	A $T_0$ space $X$ is called \emph{saturated $\omega$-well-filtered}, if for any  $\{A_n:n<\omega\}\subseteq_{flt} \mathcal S(X)$ and  $U\in\mathcal O(X)$,
	$$\bigcap_{n<\omega}A_n\subseteq U \ \Rightarrow \  \exists n_0<\omega, A_{n_0}\subseteq U.$$
\end{defi}

A countable family $\{A_n:n<\omega\}\subseteq\mathcal S(X)$ is called  a \emph{descending $\ll$-chain} if
$$A_0\gg A_1\gg A_2\gg\ldots\gg A_n\gg A_{n+1}\gg \ldots.$$

Analogous to Proposition \ref{chain}, we can prove the  following.
\begin{prop}
	A $T_0$ space $X$ is saturated $\omega$-well-filtered if and only if for any countable descending $\ll$-chain $\{A_n:n<\omega\}\subseteq\mathcal S(X)$
	and $U\in\mathcal O(X)$,
	$$\bigcap_{n<\omega}A_n\subseteq U\ \Rightarrow\ \exists n_0<\omega, \ A_{n_0}\subseteq U.$$
\end{prop}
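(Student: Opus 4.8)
The plan is to mimic the proof of Proposition~\ref{chain} almost verbatim, with the relatively-compact relation $\ll$ playing the role that the inclusion order $\supseteq$ played there. Necessity is immediate once one checks that every countable descending $\ll$-chain $A_0\gg A_1\gg\cdots$ is itself a $\ll$-filtered countable family in the sense of the definition: given $A_i,A_j$ with $i\le j$, the element $A_{j+1}$ satisfies $A_{j+1}\ll A_j$, and since $A_{j+1}\subseteq A_j\subseteq A_i$ forces $A_{j+1}\ll A_i$ (an open cover of $A_i$ covers $A_j$, hence admits a finite subcover of $A_{j+1}$), we obtain $A_{j+1}\ll A_i,A_j$. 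Thus a descending $\ll$-chain is a member of $\{A_n:n<\omega\}\subseteq_{flt}\mathcal S(X)$, and the saturated $\omega$-well-filtered hypothesis applies to it directly.

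For sufficiency, I would start from an arbitrary $\ll$-filtered family $\{A_n:n<\omega\}\subseteq\mathcal S(X)$ and an open $U$ with $\bigcap_{n<\omega}A_n\subseteq U$, and construct by induction a descending $\ll$-chain $\widehat{\mathcal A}=\{\widehat A_n:n<\omega\}$ cofinal in the family. Set $\widehat A_0=A_0$; given $\widehat A_n$ (which lies in $\{A_m:m<\omega\}$), apply the $\ll$-filteredness of $\{A_m:m<\omega\}$ to its two members $\widehat A_n$ and $A_{n+1}$ to obtain $\widehat A_{n+1}\in\{A_m:m<\omega\}$ with $\widehat A_{n+1}\ll\widehat A_n$ and $\widehat A_{n+1}\ll A_{n+1}$. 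The first relation makes $\widehat{\mathcal A}$ a descending $\ll$-chain; the second, together with $\widehat A_0=A_0$, yields $\widehat A_n\subseteq A_n$ for every $n$.

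It then remains to match the intersections and conclude. Since each $\widehat A_n\subseteq A_n$, we get $\bigcap\widehat{\mathcal A}\subseteq\bigcap_{n<\omega}A_n$, while $\widehat{\mathcal A}\subseteq\{A_m:m<\omega\}$ as a subfamily gives $\bigcap_{n<\omega}A_n\subseteq\bigcap\widehat{\mathcal A}$; hence $\bigcap\widehat{\mathcal A}=\bigcap_{n<\omega}A_n\subseteq U$. Applying the hypothesis on descending $\ll$-chains produces an $n_0$ with $\widehat A_{n_0}\subseteq U$, and because $\widehat A_{n_0}=A_{m_0}$ for some $m_0<\omega$, this delivers the required $A_{m_0}\subseteq U$.

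The only point demanding genuine care is the bookkeeping that keeps $\ll$ distinct from mere inclusion: one must use that $A\ll B$ entails $A\subseteq B$ (so that the second relation in the induction genuinely gives $\widehat A_n\subseteq A_n$) and that the object built is literally a chain under $\ll$, not merely under $\supseteq$. In contrast to Proposition~\ref{chain}, there is no need to isolate a finite case, since pairwise $\ll$-filteredness supplies the successor $\widehat A_{n+1}$ at every step regardless of how many distinct members $\{A_n:n<\omega\}$ has; in the degenerate situation where the family reduces to a single compact saturated set, the construction simply returns the constant chain, which is an admissible descending $\ll$-chain precisely because that set is $\ll$ itself.
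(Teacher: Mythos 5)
Your proof is correct and is essentially the argument the paper intends: the paper gives no explicit proof, stating only that the result is ``analogous to Proposition~\ref{chain}'', and your construction of the cofinal descending $\ll$-chain $\widehat A_0=A_0$, $\widehat A_{n+1}\ll \widehat A_n, A_{n+1}$ is exactly that analogue (with the correct extra observations that $A\ll B\subseteq C$ implies $A\ll C$ for the necessity direction, and that the finite-cardinality case needs no separate treatment here).
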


\begin{prop}\label{comp}
	Let $X$ be a saturated well-filtered space. Then for any  $\{A_i:i\in I\}\subseteq_{flt} \mathcal S(X)\setminus\{\emptyset\}$, $\bigcap_{i\in I}A_i$ is a nonempty compact saturated set.
\end{prop}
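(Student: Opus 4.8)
The plan is to check the three assertions about $K:=\bigcap_{i\in I}A_i$ in turn, treating saturatedness and nonemptiness as immediate and reserving the real work for compactness. Saturatedness needs nothing beyond the definitions: each $A_i$ is an upper set ($A_i=\ua A_i$) and an arbitrary intersection of upper sets is again an upper set, so $K=\ua K\in\mathcal S(X)$. For nonemptiness I would argue by contradiction. If $K=\emptyset$, then $K\subseteq\emptyset$ with $\emptyset$ open, so saturated well-filteredness hands back an index $i_0$ with $A_{i_0}\subseteq\emptyset$, that is $A_{i_0}=\emptyset$; this contradicts $A_{i_0}\in\mathcal S(X)\setminus\{\emptyset\}$, and hence $K\neq\emptyset$.

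The substantive part is to show $K$ is compact. Let $\mathcal U\subseteq\mathcal O(X)$ be an open cover of $K$ and set $U=\bigcup\mathcal U$, so that $\bigcap_{i\in I}A_i=K\subseteq U$. Applying saturated well-filteredness directly to this inclusion produces an index $i_0$ with $A_{i_0}\subseteq U$. Next I invoke the $\ll$-filtered hypothesis with the pair $A_{i_0},A_{i_0}$ to obtain an index $j$ with $A_j\ll A_{i_0}$. Because $A_{i_0}\subseteq U=\bigcup\mathcal U$, the family $\mathcal U$ is an open cover of $A_{i_0}$, so the relation $A_j\ll A_{i_0}$ yields a finite subfamily $\mathcal U_0\subseteq\mathcal U$ covering $A_j$. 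Since $K\subseteq A_j$, the same finite family $\mathcal U_0$ covers $K$, giving the required finite subcover; thus $K$ is compact.

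The crux of the argument—and the only place where the two parts of the hypothesis must be used together—is this last step. Saturated well-filteredness alone only locates a single member $A_{i_0}$ of the family sitting inside $U$, and such a member need not itself be compact, so no finite subcover can be read off directly. The $\ll$-filtered condition is precisely what remedies this: it furnishes a further member $A_j$ that is relatively compact in $A_{i_0}$, whereupon the definition of $\ll$ converts the open cover of $A_{i_0}$ into a finite cover of $A_j$, and the inclusion $K\subseteq A_j$ transports it back to $K$. I do not anticipate any other difficulty.
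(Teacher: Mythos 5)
Your proposal is correct and follows essentially the same route as the paper's proof: the same contradiction argument for nonemptiness using $\emptyset\in\mathcal O(X)$, and the same two-step compactness argument (saturated well-filteredness locates $A_{i_0}$ inside the union of the cover, then $\ll$-filteredness supplies $A_j\ll A_{i_0}$ whose finite subcover also covers the intersection).
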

\begin{proof}
	It is clear that  $\bigcap_{i\in I}A_i$ is saturated.
	Now suppose that $\bigcap_{i\in I}A_i=\emptyset$.
	Since $X$ is saturated well-filtered and $\emptyset$ is open, we have that $A_{i_0}\subseteq \emptyset$ for some $i_0\in I$, which contradicts that $A_{i_0}\neq\emptyset$. Thus
	$\bigcap_{i\in I}A_i\neq\emptyset$.
	
	Let $\{V_j:j\in J\}$ be an open cover of  $\bigcap_{i\in I}A_i$. As $X$ is saturated well-filtered,  there exists $i_0\in I$ such that $A_{i_0}\subseteq \bigcup_{j\in J}V_j$.  Since $\{A_i:i\in I\}\subseteq \mathcal S(X)$ is a $\ll$-filtered family, there exists $i_{1}\in I$ such that $A_{i_1}\ll A_{i_0}\subseteq \bigcup_{j\in J}V_j$. Then there exists $J_0\subseteq J^{(<\omega)}$ such that $A_{i_1}\subseteq \bigcup_{j\in J_0}V_{j}$.  It follows that $\bigcap_{i\in I}A_i\subseteq \bigcup_{j\in J_0}V_{j}$. Therefore,  $\bigcap_{i\in I}A_i$ is compact.
\end{proof}

Using a similar proof to that of Proposition \ref{comp}, we deduce the following.
\begin{prop}\label{chaincomp}
	Let $X$ be a saturated $\omega$-well-filtered space. Then for any  $\{A_n:n<\omega\}\subseteq_{flt} \mathcal S(X)\setminus\{\emptyset\}$, $\bigcap_{n<\omega}A_n$ is a nonempty compact saturated set.
\end{prop}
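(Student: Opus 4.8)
The plan is to mirror the proof of Proposition~\ref{comp} almost verbatim, substituting the $\omega$-indexed hypotheses for the $I$-indexed ones and invoking saturated $\omega$-well-filteredness wherever the earlier proof used saturated well-filteredness. The statement to prove asserts that for any $\{A_n:n<\omega\}\subseteq_{flt}\mathcal S(X)\setminus\{\emptyset\}$, the intersection $\bigcap_{n<\omega}A_n$ is a nonempty compact saturated set. Saturation of the intersection is immediate, since an arbitrary intersection of saturated (i.e.\ upper) sets is saturated.

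For nonemptiness, I would argue by contradiction exactly as before: if $\bigcap_{n<\omega}A_n=\emptyset$, then since $\emptyset\in\mathcal O(X)$ and $X$ is saturated $\omega$-well-filtered, the definition yields some $n_0<\omega$ with $A_{n_0}\subseteq\emptyset$, contradicting $A_{n_0}\neq\emptyset$. For compactness, given an arbitrary open cover $\{V_j:j\in J\}$ of $\bigcap_{n<\omega}A_n$, I would first apply saturated $\omega$-well-filteredness to the open set $\bigcup_{j\in J}V_j$ to obtain some $A_{n_0}\subseteq\bigcup_{j\in J}V_j$. Then I would use the $\ll$-filteredness of the family to pass to some $A_{n_1}\ll A_{n_0}$; the relation $A_{n_1}\ll A_{n_0}\subseteq\bigcup_{j\in J}V_j$ lets me extract a finite subcover $\{V_j:j\in J_0\}$, $J_0\in J^{(<\omega)}$, of $A_{n_1}$. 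Since $\bigcap_{n<\omega}A_n\subseteq A_{n_1}$, this same finite subfamily covers $\bigcap_{n<\omega}A_n$, establishing compactness.

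The only point warranting care—and the one place where the argument is not a purely mechanical transcription—is the application of the definition of saturated $\omega$-well-filtered. That definition is phrased for a countable family $\{A_n:n<\omega\}\subseteq_{flt}\mathcal S(X)$, so I must make sure the family $\{A_n:n<\omega\}$ supplied in the hypothesis is genuinely $\ll$-filtered and countable, which it is by assumption. In particular, for the nonemptiness step I need the inclusion $\bigcap_{n<\omega}A_n\subseteq\emptyset$, and for the compactness step the inclusion $\bigcap_{n<\omega}A_n\subseteq\bigcup_{j\in J}V_j$, each feeding the $\omega$-well-filtered implication to produce a single index $n_0$. Beyond this, there is no genuine obstacle: every step has an exact counterpart in Proposition~\ref{comp}, and this is precisely why the authors can write that the result follows ``using a similar proof.''
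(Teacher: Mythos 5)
Your proof is correct and is precisely the argument the paper intends: the authors give no separate proof for Proposition~\ref{chaincomp}, stating only that it follows ``using a similar proof to that of Proposition~\ref{comp},'' and your transcription (saturation immediate, nonemptiness via the open set $\emptyset$, compactness via applying saturated $\omega$-well-filteredness to $\bigcup_{j\in J}V_j$ and then passing to some $A_{n_1}\ll A_{n_0}$ to extract a finite subcover) is exactly that adaptation. No gaps.
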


\begin{thm}\label{th1}
	The saturated $\omega$-well-filtered spaces are exactly the $\omega$-well-filtered spaces.
\end{thm}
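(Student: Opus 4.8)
The plan is to establish the two inclusions separately. The implication ``saturated $\omega$-well-filtered $\Rightarrow$ $\omega$-well-filtered'' is immediate, while the reverse carries all the weight.

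For the forward inclusion I would simply observe that $\mathcal Q(X)\subseteq\mathcal S(X)$ and that every $\{K_n:n<\omega\}\subseteq_{flt}\mathcal Q(X)$ is already $\ll$-filtered inside $\mathcal S(X)$. Indeed, given $K_i,K_j$ there is a member $K_k$ with $K_k\subseteq K_i,K_j$ (for compact saturated sets $\ll$ reduces to $\subseteq$), and since $K_k$ is compact and contained in $K_i$, every open cover of $K_i$ restricts to a finite cover of $K_k$, i.e. $K_k\ll K_i$ in $\mathcal S(X)$, and likewise $K_k\ll K_j$. Thus any instance of the $\omega$-well-filtered condition is a special instance of the saturated one, and the implication follows at once.

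For the converse I would assume $X$ is $\omega$-well-filtered and take $\{A_n:n<\omega\}\subseteq_{flt}\mathcal S(X)$ with $\bigcap_{n<\omega}A_n\subseteq U$. By the saturated analogue of Proposition~\ref{chain} I may assume $A_0\gg A_1\gg\cdots$ is a descending $\ll$-chain, and I would argue by contraposition: suppose $A_n\not\subseteq U$ for all $n$, so the closed set $C=X\setminus U$ meets every $A_n$. Since $\{A_n\}$ is $\ll$-filtered, Lemma~\ref{llrudin} yields a minimal irreducible closed $F\subseteq C$ still meeting every $A_n$, and it then suffices to produce a point of $F\cap\bigcap_n A_n$. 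Minimality gives the key dichotomy $(\ast)$: for every open $W$ with $W\cap F\neq\emptyset$ there is $n$ with $F\cap A_n\subseteq W$ — otherwise $F\setminus W$ would be a proper closed subset of $F$ meeting all $A_n$. Moreover the saturated traces $B_n:=\ua(F\cap A_n)$ again form a descending $\ll$-chain: any open cover $\mathcal V$ of $B_n$ covers $F\cap A_n$, so $\mathcal V\cup\{X\setminus F\}$ covers $A_n$, and $A_{n+1}\ll A_n$ extracts a finite subfamily of $\mathcal V$ covering $F\cap A_{n+1}$, hence $B_{n+1}$; since $B_n\subseteq A_n$ we also get $\bigcap_n B_n\subseteq U$ while no $B_n\subseteq U$.

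The remaining and principal obstacle is the passage from these saturated data to genuinely \emph{compact} saturated sets, without which $\omega$-well-filteredness cannot be invoked. The relation $\ll$ by itself never certifies compactness of a proper subset (a cover of the smaller set is not a cover of the larger), so the only compact saturated sets I can name outright are finite up-sets $\ua E$; the task becomes to build a descending chain $K_j=\ua E_j$ of such sets, each meeting $F$ (hence $K_j\not\subseteq U$) and with $E_j\subseteq A_{n_j}$ for $n_j\to\infty$ (hence $\bigcap_j K_j\subseteq\bigcap_n A_n\subseteq U$), which would contradict $\omega$-well-filteredness via Proposition~\ref{chain}. The recursion I intend is: given $E_j\subseteq F\cap A_{n_j}$, use a finite subcover supplied by the $\ll$-chain together with $(\ast)$ to trap a nonempty $F\cap A_N$ with $N>n_j$ inside $\ua E_j$, and let $E_{j+1}$ be a point of it. The delicate point — and the step I expect to demand the most care — is guaranteeing at each stage that a deep trace $F\cap A_N$ really lands inside $\ua E_j$, rather than escaping through ``maximal'' generators from which the chain cannot descend; resolving this is exactly where the density of each $F\cap A_n$ in the irreducible set $F$ and the open-set dichotomy $(\ast)$ must be played against one another.
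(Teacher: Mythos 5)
Your forward direction and the first half of your converse are sound and coincide with the paper's: reduce to a descending $\ll$-chain, suppose $A_n\nsubseteq U$ for all $n$, apply Lemma~\ref{llrudin} to obtain a minimal irreducible closed set $F\subseteq X\setminus U$ meeting every $A_n$, and record the minimality dichotomy $(\ast)$. But the argument stops exactly where the real work begins, and you say so yourself: you never produce the compact saturated sets needed to invoke $\omega$-well-filteredness. The route you sketch --- a descending chain of finite up-sets $\ua E_j$ with $E_j\subseteq F\cap A_{n_j}$ --- founders on the step ``trap a nonempty $F\cap A_N$ inside $\ua E_j$'': the set $\ua E_j$ is a finite union of principal up-sets and in general contains no nonempty open subset, so neither the dichotomy $(\ast)$ (which applies only to \emph{open} $W$ meeting $F$) nor the relation $A_N\ll A_{n_j}$ (which speaks only of open covers) gives you any leverage to force a deep trace $F\cap A_N$ into $\ua E_j$. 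There is no reason such a chain of finite up-sets should exist --- the traces $F\cap A_n$ could, for instance, all be infinite antichains --- and you offer no argument that it does. Your auxiliary observation that the sets $B_n=\ua(F\cap A_n)$ form a descending $\ll$-chain is correct but does not help, since nothing certifies that any $B_n$ is compact.

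The missing idea, which is the heart of the paper's proof, is to choose one point $x_n\in F\cap A_n$ for each $n<\omega$ and to show that the countable (typically infinite) sets $H_n=\{x_k:k\geq n\}$ are themselves compact. Compactness is proved via the finite intersection property with a case split: if a closed set $C$ meets $H=\{x_k:k<\omega\}$ in an infinite set, then $C\cap F$ meets every $A_n$ (pass to a subsequence and use $A_k\subseteq A_n$ for $k\geq n$), so minimality of $F$ forces $F=C\cap F$, i.e.\ $F\subseteq C$; if instead some $C_{i_0}$ meets $H$ in a finite set, reduce to the compact set $H\cap C_{i_0}$. Then $\{\ua H_n:n<\omega\}$ is a descending chain in $\mathcal Q(X)$ with $\bigcap_{n<\omega}\ua H_n\subseteq\bigcap_{n<\omega}A_n\subseteq U$ while $\ua H_n\nsubseteq U$ for every $n$ (as $H_n\subseteq F\subseteq X\setminus U$), contradicting $\omega$-well-filteredness via Proposition~\ref{chain}. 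Note that the decisive use of minimality is not the open-set dichotomy $(\ast)$ you isolate, but the ``infinite trace forces $F\subseteq C$'' argument for \emph{closed} sets; without some version of that step the proof cannot be completed.
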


\begin{proof}
Note that every descending chain in $\mathcal Q(X)$ is a descending $\ll$-chain in $\mathcal S(X)$. Hence every saturated $\omega$-well-filtered space is an $\omega$-well-filtered space.
	
	Now let $X$ be an $\omega$-well-filtered space. Suppose  that $\{A_n:n<\omega\}\subseteq\mathcal S(X)$ is a descending  $\ll$-chain, i.e.,
	$$A_0\gg A_1\gg A_2\gg\ldots\gg A_n\gg A_{n+1}\gg \ldots,$$
	and $U\in\mathcal O(X)$ such that  $\bigcap_{n<\omega} A_n\subseteq U$. We need to prove that $A_{n_0}\subseteq U$ for some $n_0<\omega$.
	Otherwise, $A_n\nsubseteq U$ for all $n<\omega$, that is, $A_n\cap (X\setminus U)\neq\emptyset$. Then by Lemma \ref{llrudin}, there exists a minimal (irreducible) closed set $F\subseteq X\setminus U$ such that $F\cap A_n\neq\emptyset$ for all $n<\omega$. Choose one $x_n\in F\cap A_n$ for each $n <\omega$, and let $H:=\{x_n:n<\omega\}$.
	
	\vspace{1mm}
	
	{Claim:} $H$ is compact.
	
	\vspace{1mm}
	
	Let $\{C_i:i\in I\}$ be a family of closed subsets of $X$ such that for any $J\in I^{(<\omega)}$, $H\cap\bigcap_{i\in J}C_i\neq\emptyset$. It needs to prove that $H\cap\bigcap_{i\in I}C_i\neq\emptyset$. We complete the proof
	by considering two cases.
	
	\vspace{1mm}
	
	(c1) $C_i\cap H$ is infinite for all  $i\in I$.
	
	\vspace{1mm}
	
	In this case, for each $ n<\omega$, there exists $k_n\geq n$ such that $x_{k_n}\in C_i$. Since $A_{k_n}\subseteq A_n$ and $x_{k_n}\in F\cap A_{k_n}$, we have that  $x_{k_n}\in C_i\cap F\cap A_{k_n}\subseteq C_i\cap F\cap  A_n\neq\emptyset$.  Thus $C_i\cap F$ is a closed set that intersects all $A_n$ ($n<\omega$). By the minimality of $F$, we have $F=C_i\cap F$, that is, $F\subseteq C_i$.  By the arbitrariness of $i\in I$,  it follows that $F\subseteq \bigcap_{i\in I} C_i$. Note that $H\subseteq F$, so  $H\cap\bigcap_{i\in I} C_i=H\neq\emptyset$.
	
	\vspace{1mm}
	
	(c2) $C_{i}\cap H$ is finite for some $i\in I$.
	
	\vspace{1mm}
	
	Let $i_0\in I$ such that $C_{i_0}\cap H$ is finite (hence compact).  Note that the family $\{C_i:i\in I\}$ satisfies that for any $J\in I^{(<\omega)}$, $H\cap C_{i_0}\cap\bigcap_{i\in J}C_i\neq\emptyset$.  Since $H\cap C_{i_0}$ is compact, we conclude
	$H\cap \bigcap_{i\in I}C_i=(H\cap C_{i_0})\cap \bigcap_{i\in I}C_i\neq\emptyset$.

	Now for each $n<\omega$,  let $H_n:=\{x_k:k\geq n\}$, which is compact by using a similar proof for $H$. Then $\{\ua H_k:k<\omega\}\subseteq_{flt}\mathcal Q(X)$ such that
	$\bigcap_{n<\omega} \ua H_n\subseteq \bigcap_{n<\omega}  A_n\subseteq U$. As $X$ is an $\omega$-well-filtered space,  there exists $n_0<\omega$ such that $\ua H_{n_0}\subseteq U$, which contradicts that $H_{n_0}\subseteq F\subseteq X\setminus U$.
\end{proof}

\begin{thm}\label{th2}
	The saturated well-filtered  spaces are exactly the well-filtered spaces.
\end{thm}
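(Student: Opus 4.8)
My plan is to prove the two containments separately, the forward one being routine and the reverse one carrying all the weight.

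For the easy direction I would check directly that a saturated well-filtered space $X$ is well-filtered. Given $\mathcal K\subseteq_{flt}\mathcal Q(X)$, observe that $\mathcal K\subseteq\mathcal S(X)$ and that $\mathcal K$ is also $\ll$-filtered \emph{as a subfamily of} $\mathcal S(X)$: by part~(3) of the Remark in Section~\ref{S:two}, $\mathcal K$ is $\supseteq$-directed, so for $K_1,K_2\in\mathcal K$ there is $K_3\in\mathcal K$ with $K_3\subseteq K_1,K_2$, and since $K_3$ is compact, every open cover of $K_1$ (resp.\ $K_2$) restricts to a finite cover of $K_3$, i.e.\ $K_3\ll K_1,K_2$. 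Hence the saturated well-filtered condition applies verbatim to $\mathcal K$, giving some $K\in\mathcal K$ with $K\subseteq U$ whenever $\bigcap\mathcal K\subseteq U$.

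For the hard direction I would follow the template of Theorem~\ref{th1}. Assume $X$ is well-filtered, let $\{A_i:i\in I\}\subseteq_{flt}\mathcal S(X)$ and $U\in\mathcal O(X)$ satisfy $\bigcap_{i}A_i\subseteq U$, and suppose toward a contradiction that $A_i\nsubseteq U$ for all $i$. Then each $A_i$ meets the closed set $X\setminus U$, so Lemma~\ref{llrudin} furnishes a minimal irreducible closed $F\subseteq X\setminus U$ with $F\cap A_i\neq\emptyset$ for every $i$. It now suffices to exhibit a point of $F\cap\bigcap_i A_i$, as such a point would lie in $(X\setminus U)\cap U=\emptyset$. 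The strategy is to feed well-filteredness a $\ll$-filtered family of \emph{compact} saturated sets lying below the $A_i$ and still meeting $F$; recall that each principal upper set $\ua x$ is compact saturated. Two facts drive the construction. First, minimality of $F$ yields: for every open $V$ with $V\cap F\neq\emptyset$ there is some $i$ with $F\cap A_i\subseteq V$ (else $F\setminus V$ would be a proper closed subset of $F$ meeting all $A_i$). Second, for $A_j\ll A_i$ and any open family $\mathcal V$ covering $F\cap A_i$, the enlarged family $\mathcal V\cup\{X\setminus F\}$ covers $A_i$, so $A_j\ll A_i$ extracts a finite subfamily covering $A_j$, hence covering $F\cap A_j$. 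Thus $\ua(F\cap A_j)\ll\ua(F\cap A_i)$, so $\{\ua(F\cap A_i):i\in I\}$ is $\ll$-filtered, sits below the $A_i$ (its intersection is $\subseteq\bigcap_i A_i\subseteq U$), and each member meets $F$; read along chosen points $x_i\in F\cap A_i$, the same computation says that a single member of any open cover of $\{x_i:i\in I\}$ already covers an entire $\ll$-tail of the net $(x_i)$.

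The crux — and the step I expect to be the main obstacle — is to upgrade this $\ll$-filtered family of \emph{saturated} sets to a genuinely \emph{compact} saturated one, so that the well-filtered hypothesis, which is phrased for $\mathcal Q(X)$, actually applies. In the countable setting of Theorem~\ref{th1} this is automatic: a descending $\ll$-chain makes the relevant tails cofinite, the point set $H=\{x_n\}$ is compact precisely because ``infinite'' coincides with ``cofinal'', and $\{\ua H_n\}$ is a chain of compact saturated sets. For an arbitrary $\ll$-filtered family the index is only a directed preorder, the dichotomy ``infinite versus finite'' degenerates into ``cofinal versus non-cofinal'', and a non-cofinal trace need not be finite; consequently neither the net $\{x_i\}$ nor the sets $\ua(F\cap A_i)$ are visibly compact, and indeed relative compactness $A_j\ll A_i$ alone cannot interpolate a compact saturated set without a local-compactness assumption we are not granted.

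Overcoming this is exactly where the full relative-compactness relation $\ll$ (as opposed to mere $\subseteq$-directedness) must be exploited together with the minimality of $F$: the finite subcovers produced above have to be organized, via a Rudin-type extraction, into a downward $\subseteq$-directed family of \emph{finitely generated} upper sets $\ua E$ with $E\subseteq F$ finite (so each $\ua E$ is compact saturated), whose intersection still lies below $\bigcap_i A_i$ while every $\ua E$ meets $F$. Feeding such a family to well-filteredness would force some $\ua E\subseteq U$, contradicting $E\subseteq F\subseteq X\setminus U$ and completing the argument; as a consistency check, Proposition~\ref{comp} then also recovers that $\bigcap_i A_i$ is compact. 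I expect the genuinely delicate point to be the manufacture of these finite generators in the absence of local compactness, and this is the part of the proof I would develop most carefully.
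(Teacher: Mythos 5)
Your easy direction is fine, and your analysis of the hard direction correctly locates the obstacle --- but it does not overcome it, so the proposal has a genuine gap at exactly the step you flag as ``the crux.'' After Lemma~\ref{llrudin} gives you the minimal irreducible closed set $F$ and the $\ll$-filtered family $\{\ua(F\cap A_i)\}_{i\in I}$, you still have only \emph{saturated} sets, and the well-filtered hypothesis applies only to $\mathcal Q(X)$. Your proposed repair --- a ``Rudin-type extraction'' of finitely generated upper sets $\ua E$ with $E\subseteq F$ finite, forming a $\supseteq$-directed family whose intersection lies in $\bigcap_i A_i$ --- is only a wish: you give no construction of the sets $E$, no reason the resulting family is directed, and no reason its intersection sits inside $\bigcap_i A_i$. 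The analogue in Theorem~\ref{th1} works precisely because the index set is a countable chain, so the trace sets $H_n=\{x_k:k\ge n\}$ are cofinite tails and the finite/infinite dichotomy in the compactness argument for $H$ is available; for a general directed index set that dichotomy degenerates, as you yourself observe, and the chosen points $\{x_i\}$ need not form a compact set. So the argument, as written, does not close.

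The paper's actual proof takes a different and much shorter route that you should compare with: it does \emph{not} rerun the Rudin argument for arbitrary $I$. Instead, having established the countable case (Theorem~\ref{th1}, whence $X$ is saturated $\omega$-well-filtered), it forms
$\widehat{\mathcal A}=\bigl\{\bigcap_{n<\omega}A_n:\ A_n\in\mathcal A,\ A_0\gg A_1\gg\cdots\bigr\}$,
the intersections of countable descending $\ll$-chains drawn from $\mathcal A$. Each such intersection is nonempty compact saturated by Proposition~\ref{chaincomp}, and $\widehat{\mathcal A}$ is $\supseteq$-directed by an interleaving construction of a common refining chain $C_0\gg C_1\gg\cdots$ with $C_n\ll A_n,B_n$. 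Since $\bigcap\widehat{\mathcal A}=\bigcap\mathcal A\subseteq U$, ordinary well-filteredness applied to $\widehat{\mathcal A}\subseteq\mathcal Q(X)$ yields a chain with $\bigcap_{n}A_n\subseteq U$, and then saturated $\omega$-well-filteredness finishes: some $A_{n_0}\in\mathcal A$ lies in $U$. In short, the missing idea in your proposal is this two-stage bootstrap --- manufacture compact saturated sets as intersections of countable $\ll$-chains rather than as finitely generated upper sets inside $F$ --- and without it (or some substitute) your outline does not constitute a proof.
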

\begin{proof}
	Clearly, every saturated well-filtered space is a well-filtered space.
	
	Now assume that $X$ is a well-filtered space. Let $\mathcal A\subseteq_{flt} \mathcal S(X)$ and  $U\in\mathcal O(X)$ such that $\bigcap\mathcal A\subseteq U$.
	
	Define $$\widehat{\mathcal A}=\left\{\bigcap_{n<\omega}A_n: \forall n<\omega, A_n\in\mathcal A\text{ and } A_{n}\gg A_{{n+1}}\right\}.$$

	By Theorem \ref{th1} and the fact that every well-filtered space is $\omega$-well-filtered, we deduce that  $X$ is saturated $\omega$-well-filtered. Thus by Proposition \ref{chaincomp}, every member of $\widehat{\mathcal A}$ is  nonempty compact saturated.

	{Claim:} $\widehat{\mathcal A}$ is a filtered family.
	
	Let $\bigcap_{n<\omega}A_n, \bigcap_{n<\omega}B_n\in\widehat{\mathcal A}$, that is, $$A_0\gg A_1\gg A_2\gg\cdots \gg A_n\gg A_{n+1}\gg\ldots$$
	and $$B_0\gg B_1\gg B_2\gg\cdots \gg B_n\gg B_{n+1}\gg\cdots.$$
	
	(i) There exists $C_0\in \mathcal A$ such that $C_0\ll A_0, B_0$ because $\mathcal A$ is $\ll$-filtered.
	
	(ii)  If we have defined $\{C_{0}, C_{1},\cdots C_{n}\}$, then there  exists $A\in \mathcal A$ such that  $A\ll C_{n}, A_{{n+1}}, B_{{n+1}}$. Put $C_{{n+1}}:=A$.
	
	By Induction, we obtain a family $\{C_{n}:n<\omega\}\subseteq\mathcal A$ such that
	$$C_{0}\gg C_{1}\gg C_{2}\gg \cdots C_{n}\gg C_{{n+1}}\gg\cdots,$$
	and that $C_{n}\ll A_{n}, B_{n}$ (hence $C_n\subseteq A_n, B_n$) for all $n<\omega$.
	It follows that $\bigcap_{n<\omega}C_n\in\widehat{\mathcal A}$ and it is a lower bound of  $\left\{\bigcap_{n<\omega}A_n,\ \bigcap_{n<\omega}B_n\right\}$. Hence $\widehat{\mathcal A}$ is filtered.
	
	Since $X$ is well-filtered and  $\bigcap\widehat{\mathcal A}=\bigcap \mathcal A\subseteq U$, there is $\bigcap_{n<\omega}A_{n}\in\widehat{\mathcal A}$, where
	$$A_0\gg A_1\gg A_2\gg\cdots \gg A_n\gg A_{n+1}\gg\ldots,$$
	such that
	$\bigcap_{n<\omega}A_{n}\subseteq U$. By Theorem~\ref{th1},  $X$ is saturated $\omega$-well-filtered. Then there exists $n_0<\omega$ such that $A_{n_0}\subseteq U$ (note that $A_{n_0}\in \mathcal A$). Therefore, $X$ is well-filtered.
\end{proof}

\section{Open well-filtered spaces}\label{S:four}

In this section, we define another class of $T_0$ spaces, called open well-filtered spaces. It turns out that every well-filtered space is open well-filtered, and every core-compact open well-filtered space is sober. As an immediate consequence, we have that every core-compact well-filtered space is sober.

\begin{defi}
	A $T_0$ space is called \emph{open well-filtered}, if for any $\{U_i:i\in I\}\subseteq_{flt} \mathcal O(X)$ and $U\in\mathcal O(X)$,
	$$\bigcap_{i\in I}U_i\subseteq U \ \Rightarrow \  \exists i_0\in I,\ U_{i_0}\subseteq U.$$
\end{defi}

Note that every open set is saturated. By Theorem \ref{th2}, we obtain the following result.
\begin{rem}\label{wfiswfo}
	Every well-filtered space is open well-filtered.
\end{rem}

\begin{defi}
	A $T_0$ space $X$ is called \emph{open $\omega$-well-filtered}, if for any $\{U_n:n<\omega\}\subseteq_{flt} \mathcal O(X)$ and  $U\in\mathcal O(X)$,
	$$\bigcap_{n<\omega}U_n\subseteq U \ \Rightarrow \  \exists n_0<\omega,\ U_{n_0}\subseteq U.$$
\end{defi}

Using a similar proof to that of Proposition \ref{chain}, we deduce the following result.
\begin{prop}
	A $T_0$ space $X$ is open $\omega$-well-filtered if and only if for any countable descending $\ll$-chain $\{U_n:n<\omega\}\subseteq\mathcal O(X)$
	and $U\in\mathcal O(X)$,
	$$\bigcap_{n<\omega}U_n\subseteq U\ \Rightarrow\ \exists n_0<\omega, \ U_{n_0}\subseteq U.$$
\end{prop}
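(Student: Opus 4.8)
The plan is to mirror the proof of Proposition~\ref{chain}, replacing the inclusion order on $\mathcal Q(X)$ by the relation $\ll$ on $\mathcal O(X)$ and descending chains by descending $\ll$-chains. The necessity is immediate once I observe that every descending $\ll$-chain $U_0\gg U_1\gg\cdots$ is $\ll$-filtered, i.e.\ $\{U_n:n<\omega\}\subseteq_{flt}\mathcal O(X)$: given $U_i,U_j$ with $i\le j$, the member $U_{j+1}$ satisfies $U_{j+1}\ll U_j$ by definition of the chain, and $U_{j+1}\ll U_i$ because $U_{j+1}\ll U_j\subseteq U_i$. Hence the chain condition is just a special instance of open $\omega$-well-filteredness.

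For the sufficiency, the key auxiliary fact I will record first is that $A\ll B$ together with $B\subseteq C$ gives $A\ll C$ (any open cover of $C$ is an open cover of $B$, hence contains a finite subcover of $A$, and $A\subseteq B\subseteq C$); consequently $\ll$ is transitive. Let $\{U_n:n<\omega\}\subseteq_{flt}\mathcal O(X)$ and $U\in\mathcal O(X)$ with $\bigcap_{n<\omega}U_n\subseteq U$. If the family has only finitely many distinct members, repeated application of the $\ll$-filtered property, combined with the transitivity just noted, produces a member $Q$ with $Q\ll U_n$ for every $n$; then $Q\subseteq\bigcap_{n<\omega}U_n$, while $Q$ being one of the $U_n$ forces $\bigcap_{n<\omega}U_n\subseteq Q$. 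Thus $Q=\bigcap_{n<\omega}U_n\subseteq U$ and we are done.

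In the remaining case $|\{U_n:n<\omega\}|=\omega$, I will construct by induction a descending $\ll$-chain $\{\widehat U_n:n<\omega\}$ drawn from the family. Put $\widehat U_0=U_0$, and, having chosen $\widehat U_0,\dots,\widehat U_n$, use the $\ll$-filtered property (again with transitivity) to pick $\widehat U_{n+1}\in\{U_m:m<\omega\}$ with $\widehat U_{n+1}\ll U_{n+1},\widehat U_0,\dots,\widehat U_n$. Then $\widehat U_{n+1}\ll\widehat U_n$ makes $\{\widehat U_n:n<\omega\}$ a descending $\ll$-chain, and $\widehat U_n\subseteq U_n$ for every $n$, whence $\bigcap_{n<\omega}\widehat U_n\subseteq\bigcap_{n<\omega}U_n\subseteq U$. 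Applying the chain hypothesis yields some $n_0<\omega$ with $\widehat U_{n_0}\subseteq U$; since each $\widehat U_{n_0}$ is one of the original $U_m$, this exhibits a member of the family contained in $U$, completing the proof.

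As for the main obstacle: unlike the situation in Proposition~\ref{chain}, where Remark~(3) identifies $\ll$-filteredness on $\mathcal Q(X)$ with $\supseteq$-directedness, here $\ll$ is strictly finer than inclusion, so the only genuine care needed is to verify that the inductively selected $\widehat U_{n+1}$ are true $\ll$-lower bounds (not merely $\subseteq$-lower bounds) and that the finite-family reduction really yields the intersection as a member. Both points reduce to the transitivity property of $\ll$ recorded at the start, so once that lemma is in hand the argument runs exactly parallel to Proposition~\ref{chain}.
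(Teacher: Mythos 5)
Your proposal is correct and follows essentially the same route the paper intends (the paper only says ``use a similar proof to that of Proposition~\ref{chain}''): split into the finite and countably infinite cases and, in the latter, extract a descending $\ll$-chain by induction. Your explicit verification that $A\ll B\subseteq C$ implies $A\ll C$ is exactly the small extra point needed to transfer the argument from $(\mathcal Q(X),\supseteq)$ to $(\mathcal O(X),\ll)$, and it is handled correctly.
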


Analogous to Proposition \ref{comp}, we have the following two results.
\begin{prop} \label{p1}
	Let  $X$ be an open well-filtered space. Then for any $\{U_i:i\in I\}\subseteq_{flt} \mathcal O(X)$,  $\bigcap_{i\in I}U_i$ is a nonempty  compact saturated set.
\end{prop}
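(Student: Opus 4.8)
The plan is to follow the proof of Proposition~\ref{comp} almost verbatim, replacing $\mathcal S(X)$ by $\mathcal O(X)$ and saturated well-filteredness by open well-filteredness, and to split the verification into three parts: saturatedness, nonemptiness, and compactness of $\bigcap_{i\in I}U_i$ (where, as in Proposition~\ref{comp}, the members $U_i$ are taken to be nonempty, since otherwise the intersection collapses to $\emptyset$).

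First, saturatedness is immediate: every open set is saturated, and an arbitrary intersection of saturated sets is again saturated, being an upper set under the specialization order; hence $\bigcap_{i\in I}U_i$ is saturated. For nonemptiness I would argue by contradiction exactly as in Proposition~\ref{comp}: if $\bigcap_{i\in I}U_i=\emptyset$ then, since $\emptyset\in\mathcal O(X)$, open well-filteredness applied to the inclusion $\bigcap_{i\in I}U_i\subseteq\emptyset$ yields some $i_0\in I$ with $U_{i_0}\subseteq\emptyset$, i.e.\ $U_{i_0}=\emptyset$, contradicting the nonemptiness of the members.

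The substantive part is compactness. Given an open cover $\{V_j:j\in J\}$ of $\bigcap_{i\in I}U_i$, I would set $V:=\bigcup_{j\in J}V_j\in\mathcal O(X)$, so that $\bigcap_{i\in I}U_i\subseteq V$. By open well-filteredness there is $i_0\in I$ with $U_{i_0}\subseteq V$. Here is where the $\ll$-filtered hypothesis enters: choose $i_1\in I$ with $U_{i_1}\ll U_{i_0}$. Since $U_{i_0}\subseteq V=\bigcup_{j\in J}V_j$, the family $\{V_j:j\in J\}$ is an open cover of $U_{i_0}$, so by the definition of $U_{i_1}\ll U_{i_0}$ it contains a finite subcover $\{V_j:j\in J_0\}$ of $U_{i_1}$ with $J_0\in J^{(<\omega)}$. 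As $U_{i_1}$ is itself a member of the family, $\bigcap_{i\in I}U_i\subseteq U_{i_1}\subseteq\bigcup_{j\in J_0}V_j$, exhibiting the required finite subcover and proving compactness.

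I expect the only genuine subtlety to be the role of the $\ll$-filtered condition: an open set need not be compact, so $U_{i_0}\subseteq V$ by itself does not deliver a finite subcover, and it is precisely the relative compactness $U_{i_1}\ll U_{i_0}$ that lets us descend from a cover of $U_{i_0}$ to a finite cover of $U_{i_1}$, and hence of the whole intersection. Everything else is a routine transcription of the argument for Proposition~\ref{comp}.
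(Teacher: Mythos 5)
Your proof is correct and is essentially the paper's own argument: the paper proves Proposition~\ref{p1} only by declaring it ``analogous to Proposition~\ref{comp}'', and your transcription --- apply open well-filteredness to the union of the cover to obtain some $U_{i_0}$ inside it, then use the $\ll$-filtered hypothesis to pick $U_{i_1}\ll U_{i_0}$ and extract a finite subcover of $U_{i_1}$, hence of the intersection --- is exactly that analogue. Your remark that the members must be taken nonempty (mirroring the explicit hypothesis $\mathcal S(X)\setminus\{\emptyset\}$ in Proposition~\ref{comp}) is also the correct reading of the statement, since a $\ll$-filtered family of open sets may contain $\emptyset$.
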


\begin{prop} \label{p2}
	Let  $X$ be an open $\omega$-well-filtered space. Then for any  $\{U_n:n<\omega\}\subseteq_{flt} \mathcal O(X)$,\ $\bigcap_{n<\omega}U_n$ is  a nonempty  compact saturated set.
\end{prop}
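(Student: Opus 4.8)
The plan is to follow the proof of Proposition~\ref{comp} essentially verbatim, with saturated well-filteredness there replaced by open $\omega$-well-filteredness here, and to verify the three properties of $K:=\bigcap_{n<\omega}U_n$ separately. Saturatedness is immediate: each $U_n$ is open, hence an upper set in the specialization order, and an arbitrary intersection of upper sets is again an upper set, so $K$ is saturated. For nonemptiness I would argue by contradiction as in the prototype. Since $\emptyset\in\mathcal O(X)$, if $K=\emptyset$ then $K\subseteq\emptyset$, so open $\omega$-well-filteredness applied to $\{U_n:n<\omega\}\subseteq_{flt}\mathcal O(X)$ and the open set $\emptyset$ produces some $n_0<\omega$ with $U_{n_0}\subseteq\emptyset$, i.e.\ $U_{n_0}=\emptyset$; this contradicts the (implicit, matching the $\mathcal S(X)\setminus\{\emptyset\}$ of Proposition~\ref{comp}) assumption that the members of the family are nonempty, so $K\neq\emptyset$.

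The substantive step is compactness, and it is here that the relation $\ll$ must be used with care. Let $\{V_j:j\in J\}$ be an open cover of $K$ and put $V:=\bigcup_{j\in J}V_j\in\mathcal O(X)$, so that $K\subseteq V$. Applying open $\omega$-well-filteredness to $\{U_n:n<\omega\}$ and $V$ yields some $n_0<\omega$ with $U_{n_0}\subseteq V$. I would then invoke $\ll$-filteredness of the family (taking $A_1=A_2=U_{n_0}$ in the definition) to get $n_1<\omega$ with $U_{n_1}\ll U_{n_0}$. Since $U_{n_0}\subseteq\bigcup_{j\in J}V_j$, the family $\{V_j:j\in J\}$ is an open cover of $U_{n_0}$; hence, by the definition of $U_{n_1}\ll U_{n_0}$, there is a finite $J_0\in J^{(<\omega)}$ with $U_{n_1}\subseteq\bigcup_{j\in J_0}V_j$. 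Because $K\subseteq U_{n_1}$, this gives $K\subseteq\bigcup_{j\in J_0}V_j$, a finite subcover, so $K$ is compact.

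I expect the only genuine obstacle to be this compactness argument: the definition of open $\omega$-well-filtered only delivers a member $U_{n_0}$ sitting inside the union $V$, not a finite subcover of anything, and it is precisely the relative-compactness relation $\ll$ packaged into the filteredness hypothesis that converts a cover of the larger open set $U_{n_0}$ into a finite subcover of the smaller member $U_{n_1}\ll U_{n_0}$. Once this passage is observed, the saturatedness and nonemptiness steps transfer mechanically from Proposition~\ref{comp}, and the same outline also establishes Proposition~\ref{p1} with the index $\omega$ replaced by an arbitrary $I$ and open $\omega$-well-filteredness by open well-filteredness.
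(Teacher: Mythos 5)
Your proof is correct and is exactly the adaptation of Proposition~\ref{comp} that the paper intends when it says the result is ``analogous''; the passage from the cover of $U_{n_0}$ to a finite subcover of some $U_{n_1}\ll U_{n_0}$ is precisely the paper's argument. Your observation that nonemptiness of the members must be assumed implicitly (as in the $\mathcal S(X)\setminus\{\emptyset\}$ of Proposition~\ref{comp}) is a fair and accurate reading of a small imprecision in the statement.
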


\begin{thm}\label{th}
	Every core-compact  open well-filtered space is sober.
\end{thm}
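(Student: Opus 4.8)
The plan is to take an arbitrary (nonempty) irreducible closed set $F\subseteq X$ and produce a point $x$ with $F=\cl(\{x\})=\da x$; since $X$ is $T_0$ such a point is automatically unique, so this yields sobriety. The guiding idea is that the generic point we seek, if it exists, must have neighbourhood filter exactly $\Phi:=\{U\in\mathcal O(X):U\cap F\neq\emptyset\}$, because open sets are saturated and hence $x\in U$ is equivalent to $U\cap F\neq\emptyset$ once $x$ generates $F$. So I would work with $\Phi$ directly. First I would check that $\Phi$ is a proper filter of open sets: it is upward closed, it omits $\emptyset$, it contains $X$ (as $F\neq\emptyset$), and it is closed under binary intersection precisely because $F$ is irreducible. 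Indeed, if $U_1\cap F\neq\emptyset$ and $U_2\cap F\neq\emptyset$ while $U_1\cap U_2\cap F=\emptyset$, then $F\subseteq (X\setminus U_1)\cup(X\setminus U_2)$, and irreducibility forces $F\subseteq X\setminus U_i$ for some $i$, contradicting $U_i\cap F\neq\emptyset$.

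The key step is to upgrade $\Phi$ from a filter to a $\ll$-filtered family, and this is exactly where core-compactness enters. Recall that $X$ is core-compact iff $\mathcal O(X)$ is a continuous lattice, and that for open sets the way-below relation of $\mathcal O(X)$ coincides with the relation $\ll$ of the paper (cf. Remark (2)). Thus every open $W$ satisfies $W=\bigcup\{V\in\mathcal O(X):V\ll W\}$ with the family on the right directed. Given $U_1,U_2\in\Phi$, the filter property gives $y\in F\cap U_1\cap U_2$, and continuity then supplies an open $V$ with $y\in V\ll U_1\cap U_2$. Since $y\in V\cap F$ we have $V\in\Phi$, and $V\ll U_1\cap U_2\subseteq U_1,U_2$ gives $V\ll U_1,U_2$. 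Hence $\Phi\subseteq_{flt}\mathcal O(X)$.

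Now set $Q:=\bigcap\Phi$. By Proposition \ref{p1} this $Q$ is a nonempty compact saturated set, but for the conclusion I only need the following consequence of open well-filteredness: $Q\cap F\neq\emptyset$. Indeed, if $Q\cap F=\emptyset$ then $\bigcap\Phi=Q\subseteq X\setminus F$, and since $X\setminus F$ is open and $\Phi\subseteq_{flt}\mathcal O(X)$, open well-filteredness yields some $U_0\in\Phi$ with $U_0\subseteq X\setminus F$; this contradicts $U_0\cap F\neq\emptyset$. Finally I would verify that any $x\in Q\cap F$ is a generic point of $F$: from $x\in F$ and $F$ closed we get $\cl(\{x\})\subseteq F$; conversely, for $y\in F$ every open neighbourhood $U$ of $y$ meets $F$ (at $y$), so $U\in\Phi$ and therefore $x\in U$, whence $y\in\cl(\{x\})$. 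Thus $F\subseteq\cl(\{x\})$, giving $F=\cl(\{x\})=\da x$.

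I expect the only genuine difficulty to be the $\ll$-filtered upgrade of $\Phi$ in the second step: the filter property of $\Phi$ is a soft consequence of irreducibility, and the concluding contradiction is a one-line application of open well-filteredness, but converting ordinary filteredness into $\ll$-filteredness is exactly what forces the core-compactness hypothesis, via the interpolation $y\in V\ll U_1\cap U_2$ furnished by continuity of $\mathcal O(X)$. It is worth noting that core-compactness is used only here, and that the advertised corollary—every core-compact well-filtered space is sober—then follows at once by combining this theorem with Remark \ref{wfiswfo}.
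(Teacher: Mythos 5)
Your proposal is correct and follows essentially the same route as the paper: you form the family of open sets meeting the irreducible closed set, use irreducibility together with core-compactness (interpolation in $\mathcal O(X)$) to show it is $\ll$-filtered, apply open well-filteredness to find a point of $F$ in the intersection, and verify that this point is generic. The only cosmetic difference is that the paper establishes $F=\cl(\{x_0\})$ by a short contradiction with $X\setminus\cl(\{x_0\})\in\mathcal F_A$, whereas you argue directly that every neighbourhood of each $y\in F$ contains $x$; both are equivalent.
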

\begin{proof}
	Assume $X$ is a core-compact  open well-filtered space.  Let $A$ be an irreducible closed subset of $X$.
	Define
	$$\mathcal F_A:=\{U\in\mathcal O(X): U\cap A\neq\emptyset\}.$$
	
	\vspace{2mm}
	
	{Claim:} $\mathcal F_A$ is a $\ll$-filtered family.
	
	\vspace{2mm}
	
	Let $U_1,U_2\in\mathcal F_A$. Then $U_1\cap A\neq \emptyset\neq U_2\cap A$. As $A$ is irreducible, $U_1\cap U_2\cap A\neq\emptyset$, so there is an  $x\in A\cap U_1\cap U_2$. Since $X$ is core-compact, there exists $U_3\in\mathcal O(X)$ such that $x\in U_3\ll U_1\cap U_2$. Note that $x\in U_3\cap A\neq\emptyset$, so $U_3\in\mathcal F_A$. Hence, $\mathcal F_A$ is a $\ll$-filtered family.
	
	Since $X$ is open well-filtered, $A\cap\bigcap_{i\in I}\mathcal F_A\neq\emptyset$. Let $x_0\in A\cap\bigcap\mathcal F_A$. We show that $A=\cl(\{x_0\})$. Otherwise, $A\setminus\cl(\{x_0\})= A\cap (X\setminus\cl(\{x_0\}))\neq\emptyset$, implying that $X\setminus\cl(\{x_0\})\in\mathcal{F}_A$. It follows that $x_0\in\bigcap\mathcal{F_A}\subseteq X\setminus\cl(\{x_0\})$, a contradiction. Thus $A=\cl(\{x_0\})$. Since $X$ is a $T_0$ space, $\{x_0\}$ is unique. So  $X$ is sober.
\end{proof}

As a consequence of Remark \ref{wfiswfo} and Theorem \ref{th}, we obtain the following result.

\begin{cor}
	Every core-compact well-filtered space is sober.
\end{cor}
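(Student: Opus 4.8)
The plan is to obtain the corollary as a direct composition of the two preceding results, with essentially no new work required beyond invoking them in the right order. First I would fix an arbitrary core-compact well-filtered space $X$. The key observation is that core-compactness is a property of the topology that is untouched by any reformulation of the filtering condition, so it persists unchanged throughout the argument; the only thing that varies is which notion of well-filteredness we are currently exploiting.

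The single substantive step is to pass from well-filteredness to open well-filteredness. By Remark \ref{wfiswfo}, every well-filtered space is open well-filtered, so $X$ is open well-filtered. Recall that this remark itself rests on Theorem \ref{th2}, which identifies the well-filtered spaces with the saturated well-filtered spaces, together with the elementary fact that every open set is saturated: a $\ll$-filtered family $\{U_i:i\in I\}\subseteq_{flt}\mathcal O(X)$ is in particular a $\ll$-filtered family in $\mathcal S(X)$, so the defining implication for saturated well-filteredness specializes immediately to the open case.

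Having established that $X$ is simultaneously core-compact and open well-filtered, I would apply Theorem \ref{th}, which asserts exactly that every core-compact open well-filtered space is sober. This yields at once that $X$ is sober, completing the proof.

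There is no genuine obstacle to isolate at this level: the entire difficulty has been front-loaded into the two cited results. If instead one wished to prove the corollary directly, the hard part would be reproving the content of Theorem \ref{th} --- for each irreducible closed set $A$, forming the family $\mathcal F_A$ of open sets meeting $A$, using core-compactness to interpolate a set $U_3$ with $x\in U_3\ll U_1\cap U_2$ and thereby verify that $\mathcal F_A$ is $\ll$-filtered, and finally extracting a generic point from $A\cap\bigcap\mathcal F_A$ via the open well-filtered hypothesis. The whole design of the paper is precisely to factor that difficulty through the intermediate notion of an open well-filtered space, so that the corollary reduces to a one-line consequence of Remark \ref{wfiswfo} and Theorem \ref{th}.
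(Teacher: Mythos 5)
Your proposal is correct and matches the paper's own argument exactly: the corollary is obtained by combining Remark \ref{wfiswfo} (every well-filtered space is open well-filtered) with Theorem \ref{th} (every core-compact open well-filtered space is sober). Nothing further is needed.
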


\begin{thm}\label{th3}
	Every core-compact open $\omega$-well-filtered space is locally compact.
\end{thm}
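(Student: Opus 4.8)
The plan is to show that every point has a neighbourhood basis of compact saturated sets. Concretely, I fix a point $x\in X$ and an open set $U$ with $x\in U$, and aim to produce a compact saturated set $Q$ with $x\in\mathrm{int}(Q)\subseteq Q\subseteq U$; since $x$ and $U$ are arbitrary, this yields local compactness. Recall that core-compactness means precisely that for every $x\in W\in\mathcal O(X)$ there is some $V\in\mathcal O(X)$ with $x\in V\ll W$ (this is exactly how core-compactness is used in the proof of Theorem~\ref{th}). The compactness of $Q$ will be delivered by Proposition~\ref{p2}, so the whole difficulty is to realize $Q$ as the intersection of a suitable $\ll$-filtered countable family of open sets while keeping $x$ in its \emph{interior}.

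The first ingredient I would record is the interpolation property of $\ll$ on open sets: if $V\ll W$ with $V,W\in\mathcal O(X)$, then there is $W'\in\mathcal O(X)$ with $V\ll W'\ll W$. On open sets the topological relation $\ll$ coincides with the way-below relation of the frame $\mathcal O(X)$, and core-compactness says exactly that $\mathcal O(X)$ is a continuous lattice (every open set is the directed union of the open sets way-below it, using that $\ll$ is closed under finite unions); interpolation is then the standard interpolation property of continuous posets \cite{redbook,Jean-2013}.

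With interpolation in hand, the key step is the following construction. Using core-compactness, first choose $V_0\in\mathcal O(X)$ with $x\in V_0\ll U$. Then, by repeated interpolation, I build a descending $\ll$-chain $U_0\gg U_1\gg U_2\gg\cdots$ of open sets sandwiched between $V_0$ and $U$: starting from $V_0\ll U_0\ll U$, and having produced $U_n$ with $V_0\ll U_n$, interpolation yields $U_{n+1}$ with $V_0\ll U_{n+1}\ll U_n$. By construction $V_0\subseteq U_{n+1}\subseteq U_n\subseteq U$ for every $n$, so $\{U_n:n<\omega\}$ is a descending $\ll$-chain, hence $\ll$-filtered. Applying Proposition~\ref{p2}, the set $Q:=\bigcap_{n<\omega}U_n$ is a nonempty compact saturated set, and $Q\subseteq U_0\subseteq U$. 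Crucially, since $V_0\subseteq U_n$ for all $n$, we obtain $V_0\subseteq Q$; as $V_0$ is open this gives $x\in V_0\subseteq\mathrm{int}(Q)\subseteq Q\subseteq U$, which is exactly what local compactness requires.

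The main obstacle I anticipate is precisely the interior condition $x\in\mathrm{int}(Q)$. The naive attempt---iterating core-compactness at the single point $x$ to get a descending $\ll$-chain $\{U_n\}$ with $x\in U_n$---produces a compact saturated $Q=\bigcap_n U_n$ containing $x$, but gives no reason for $Q$ to be a \emph{neighbourhood} of $x$. This is resolved by the interpolation step, which lets me trap a fixed open set $V_0\ni x$ below \emph{every} term of the chain, forcing $V_0\subseteq Q$ and hence making $Q$ a neighbourhood of $x$. Once this is arranged, compactness is immediate from Proposition~\ref{p2} and nothing further is needed.
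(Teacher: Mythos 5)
Your proof is correct and follows essentially the same route as the paper: use core-compactness to interpolate a descending $\ll$-chain of open sets between a fixed open neighbourhood of $x$ and $U$, and then apply Proposition~\ref{p2} to conclude that the intersection is a compact saturated neighbourhood of $x$ contained in $U$. The only difference is cosmetic — the paper simply asserts the existence of the interpolating sequence, whereas you spell out the interpolation property of $\ll$ on $\mathcal O(X)$; your citation of Proposition~\ref{p2} (rather than \ref{p1}) is in fact the more accurate one for the $\omega$-well-filtered hypothesis.
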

\begin{proof}
	Assume that $X$ is a core-compact open $\omega$-well-filtered space. Let $x\in X$ and $U\in\mathcal O(X)$ such that $x\in U$. Since $X$ is core-compact, there exists an open set $W\ll U$ such that $x\in W$ and a sequence of open sets $\{U_n:n<\omega\}$ such that
	$$U=U_0\gg U_1\gg U_2\gg U_3\gg\ldots\gg W.$$
	Let $Q=\bigcap_{n<\omega}U_n$. Since $X$ is open $\omega$-well-filtered and by Proposition \ref{p1}, $Q\in\mathcal Q(X)$ satisfies that 	$x\in W\subseteq Q\subseteq U$. Thus $X$ is locally compact.
\end{proof}

\begin{rem}
	In   \cite{Goubaultblog}, J. Goubault-Larrecq gives a  slighly different  proof for the above theorem.
	\end{rem}

As a corollary of Theorem \ref{th3}, we deduce the following result.

\begin{cor}\label{c1}
	A well-filtered space is core-compact if and only if it is locally compact.
\end{cor}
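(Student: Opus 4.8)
The plan is to prove the two implications separately, obtaining the nontrivial direction from Theorem \ref{th3} and handling the converse by a standard compactness argument that is valid in arbitrary $T_0$ spaces. Throughout, let $X$ denote a well-filtered space.

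For the implication that core-compactness yields local compactness, I would first observe that $X$ is open well-filtered by Remark \ref{wfiswfo}, and that open well-filteredness immediately specializes to open $\omega$-well-filteredness: any countable $\ll$-filtered family $\{U_n:n<\omega\}\subseteq_{flt}\mathcal O(X)$ is in particular a $\ll$-filtered family of open sets of the form $\{U_i:i\in I\}\subseteq_{flt}\mathcal O(X)$, so the defining implication for open well-filteredness covers it. Hence a core-compact well-filtered space is a core-compact open $\omega$-well-filtered space, and Theorem \ref{th3} applies directly to conclude that $X$ is locally compact.

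For the converse, that local compactness yields core-compactness, I would argue without invoking well-filteredness at all, since this implication holds in every $T_0$ space. Let $x\in X$ and $U\in\mathcal O(X)$ with $x\in U$. By local compactness there is a compact saturated set $Q$ and an open set $V$ with $x\in V\subseteq Q\subseteq U$. I claim $V\ll U$: indeed $V\subseteq U$, and given any open cover of $U$, it in particular covers the compact set $Q$, so finitely many of its members cover $Q$ and therefore cover $V\subseteq Q$; this is exactly a finite subcover of $V$. Thus $x\in V\ll U$, witnessing core-compactness.

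I do not expect a genuine obstacle, because the substantive content is already isolated in Theorem \ref{th3}. The only points requiring care are bookkeeping: confirming that open well-filtered implies open $\omega$-well-filtered, so that Theorem \ref{th3}, which is stated for open $\omega$-well-filtered spaces, is applicable; and verifying the general fact that a compact saturated neighborhood of $x$ inside $U$ produces an open set way-below $U$ containing $x$. Both are routine, so the corollary is essentially a matter of assembling Remark \ref{wfiswfo} and Theorem \ref{th3} with the standard \emph{locally compact implies core-compact} lemma.
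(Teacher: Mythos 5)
Your proposal is correct and follows exactly the route the paper intends: the paper derives the corollary from Theorem \ref{th3} (via Remark \ref{wfiswfo} and the evident fact that open well-filtered implies open $\omega$-well-filtered), leaving the standard converse implication from local compactness to core-compactness implicit. Your write-up simply fills in those routine details correctly.
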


The following result is  a small variant of Kou's result that every well-filtered space is a $d$-space (see Proposition 2.4 in \cite{kou}).
\begin{propC}[\cite{kou}]\label{pro}
	Let $X$ be an $\omega$-well-filtered space. If  $D$ is a directed (under the specialization order) subset of $X$ with the cardinality $|D|\le \omega$,  then $\bigvee D$ exists.
\end{propC}

Let $P$ be a poset.	A subset $U$ of  $P$ is \emph{Scott open} if
(i) $U=\mathord{\uparrow}U$ and (ii) for any directed subset $D$ of $P$ for
which $\bigvee D$ exists, $\bigvee D\in U$ implies $D\cap
U\neq\emptyset$. All Scott open subsets of $P$ form a topology,
called the \emph{Scott topology} on $P$ and
denoted by $\sigma(P)$. The space $\Sigma P=(P,\sigma(P))$ is called the
\emph{Scott space} of $P$.

\begin{exa}\label{exam}
	Let $\mathbb J=\mathbb N\times(\mathbb N\cup\{\omega\})$ be the Johnstone's dcpo and $\mathbb N=\{1,2,3\ldots\}$ with the usual order. Let $P=\mathbb J\cup \mathbb N$. For any $x,y\in P$, define $x\leq y$ (refer to Figure 1) if one of the following conditions holds:
	\begin{enumerate}[(i)]
		\item $x,y\in \mathbb N$ and $x\leq y$ in $\mathbb N$;
		\item $x,y\in \mathbb J$ and $x\leq y$ in $\mathbb J$;
		\item $x\in \mathbb N,y\in\mathbb J$ and $y= (x,\omega)$.
	\end{enumerate}
	\begin{figure}[ht]
		\centering
		\includegraphics[scale=0.14]{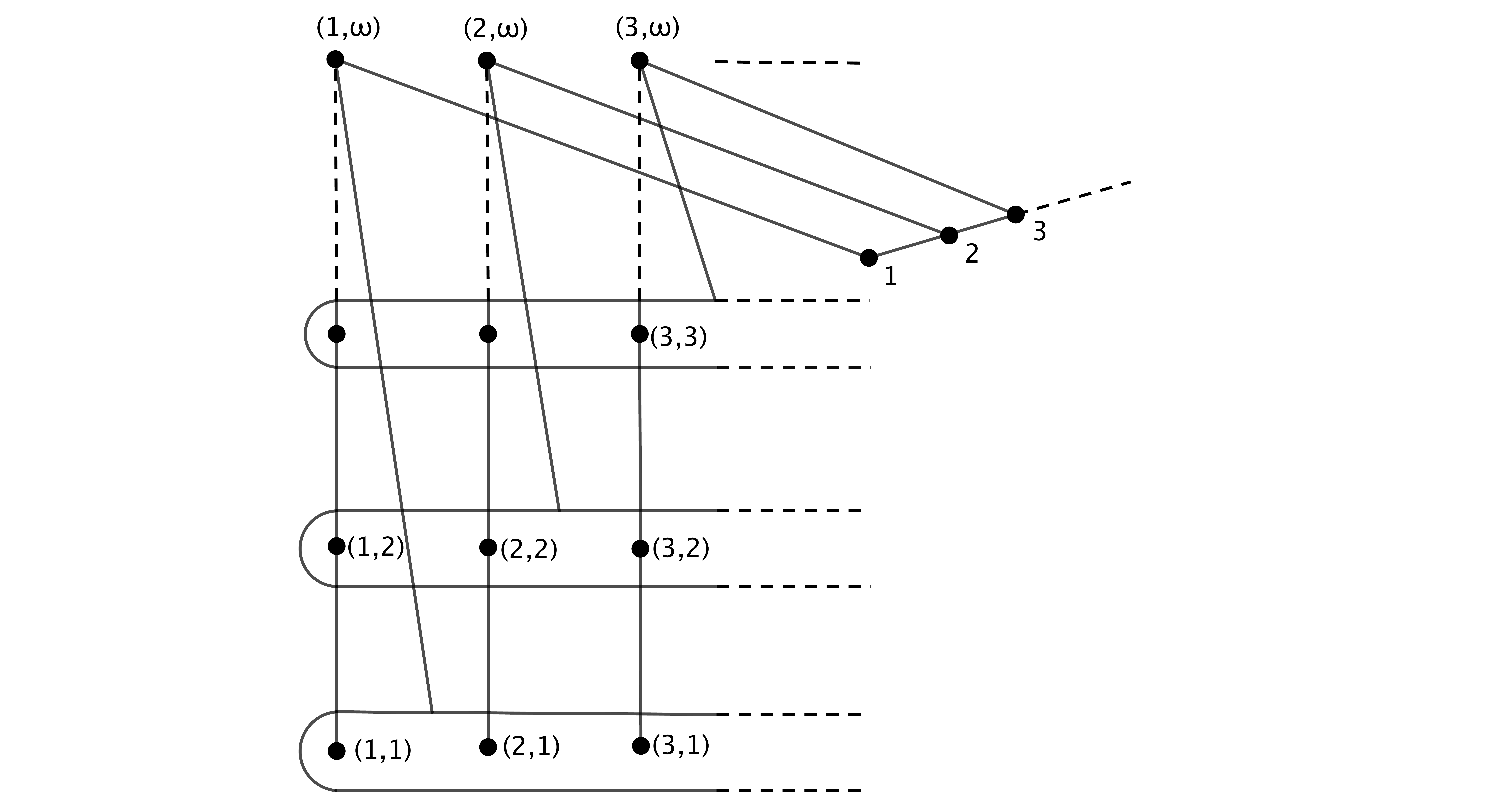}
		\caption{The poset $P$}
	\end{figure}
	
Next, we show that $\Sigma\,P$	is an open well-filtered space. The following conclusions on $P$ will be used later.
	
	\begin{enumerate}[(c1)]
		\item For any directed subset $D$ of $P$, if $\bigvee D$ exists, then  $\bigvee D\in D$ or $D\subseteq\mathbb J$.
		
		\vspace{1mm}

		In fact, assume $D\nsubseteq \mathbb J$, that is, $D\cap\mathbb N\neq\emptyset$. We prove $\bigvee D\in D$ by considering  the following two cases:
			
			Case 1: $D\subseteq\mathbb N$. In this case, it is trivial that $\bigvee D\in D$.
			
			Case 2:  $D\nsubseteq \mathbb N$, that is, $D\cap \mathbb{N}\neq\emptyset$ and $D\cap \mathbb J\neq\emptyset$. Let $k\in D\cap\mathbb N$ and $(m,n)\in D\cap\mathbb J$. Since  $D$ is directed, there exists $d\in D$ such that $k, (m,n)\leq d$. It forces that  $d=(n_0,\omega)$ for some $n_0\in\mathbb N$, which is a maximal point of $P$, so $d=\bigvee D\in D$ (note that the maximal point of a directed set  is exactly the least upper bound of the set).
	\end{enumerate}
	
	\vspace{1mm}
	
	\begin{enumerate}[(c2)]
		\item [(c2)]  For any $U\in\sigma(P)\setminus\{\emptyset\}$, there exists a minimal $n_U\in\mathbb N$ such that $(n,\omega)\in U$ for all $n\geq n_U$,  that is, $n_U=\min\{k\in\mathbb N:\forall n\geq k, (n,w)\in U\}$ exists in $\mathbb N$ (refer to Figure 1).
	
	\vspace{1mm}

		Assume, on the contrary, that $n_U$ does not exist in $\mathbb N$. Then there exist infinitely countable numbers $n_1,n_2,n_3,\ldots$ in $\mathbb N$ such that $(n_k, \omega)\notin U$ for all $k\in\mathbb N$. Since $U$ is a nonempty upper set, there exists $m\in\mathbb N$ such that $(m,\omega)\in U$. Note that $\{(m,n_k): k\in\mathbb N\}$ is a directed set such that $\bigvee \{(m,n_k): k\in\mathbb N\}=(m,\omega)\in U$. Then there exists $k_0\in\mathbb N$ such that $(m,n_{k_0})\in U$. Since $(n_{k_0},\omega)\geq (m,n_{k_0})\in U$ and $U$ is an upper set,  it holds that  $(n_{k_0},\omega)\in U$, a contradiction.
\end{enumerate}

	\vspace{1mm}
	
	\begin{enumerate}[(c3)]
		\item For any $U\in \sigma(P)\setminus\{\emptyset\}$ and  $\forall (n,\omega)\in U$, there exists a minimal $\phi_U(n)\in\mathbb N$ such that $(n,\phi_U(n))\in U$, i.e., $\phi_U(n)=\min\{m\in \mathbb N: (n,m)\in U\}$ exists.
		
		\vspace{1mm}
			Suppose  $(n,\omega)\in U$. Then $\{(n,m): m\in\mathbb N\}$ is a directed subset of $P$ such that  $\bigvee \{(n,m): m\in\mathbb N\}=(n,\omega)\in U$. Since $U$ is Scott open,  there exists  $m_0\in \mathbb N$ such that $(n,m_0)\in U$.
			Thus  $m_0\in \{m\in \mathbb N: (n,m)\in U\}\neq\emptyset$, which means that  $\varphi_U(n)=\min\{m\in \mathbb N: (n,m)\in U\}$ exists.
	\end{enumerate}

	\vspace{1mm}

	\begin{enumerate}[(c4)]
		\item  For any $U,V\in \sigma(P)$, $U\ll V$ if and only if  $U=\emptyset$.
		
		\vspace{1mm}
		If $U=\emptyset$, then trivially $U=\emptyset\ll V$.
			Now assume $U\ll V$ and $U\neq\emptyset$.
			By using (c2) and (c3), for each $n\geq n_U$, define $U_n=P\setminus \bigcup_{k\geq n}\da(k,\phi_U(k))$.
			It is trivial that  $\bigcup_{k\geq n}\da(k,\phi_U(k))$ is a Scott closed subset of $P$, which means that $U_n\in\sigma(P)$.
		Since $\bigcup_{n\geq n_U}P\setminus \bigcup_{k\geq n}\da(k,\phi_U(k))=P\setminus \bigcap_{n\geq n_U}\bigcup_{k\geq n}\da(k,\phi_U(k))=P\setminus\emptyset=P$. Thus the family $\{U_n:n\geq n_U\}$ is a directed open cover of $P$, hence a directed open cover of $V$.
			By assumption that $U\ll V$, there exists $n_0\geq n_U$ such that $U\subseteq U_{n_0}$. By the definition of $n_U$, it follows that  $(n_0,\omega)\in U$. Thus by the definition of $\phi_U$ in  (c3), we have that $(n_0,\phi_U(n_0))\in U$, but $(n_0,\phi_U(n_0))\notin P\setminus\da(n_0,\phi_U(n_0))\supseteq U_{n_0}$, which implies $(n_0,\phi_U(n_0))\notin U_{n_0}$.  It follows that $(n_0,\phi_U(n_0))\in U\setminus U_{n_0}$, contradicting that $U\subseteq U_{n_0}$.
	\end{enumerate}

\vspace{1mm}

	By (c4),  we conclude that every $\ll$-filtered family $\mathcal F$ of Scott open subsets of $P$ contains $\emptyset$, the minimal element in $\mathcal F$.
 This implies that $\Sigma P$ is an open well-filtered space (hence an open $\omega$-well-filtered space).  Since $\bigvee\mathbb N$ does not exist in $P$ and by Proposition \ref{pro},  $\Sigma P$ is not an $\omega$-well-filtered space.
\end{exa}

Recall that a $T_0$ space $X$ is called a \emph{$d$-space} if $X$ is a dcpo in its specialization order and each open subset of $X$ is Scott open in the specialization order. It is well-known that every well-filtered space is a $d$-space \cite{Jean-2013}.
From Example \ref{exam}, we  have the following conclusions.
\begin{enumerate}
	\item An open well-filterd  space need not  be a $d$-space.
	\item An open well-filtered space need not be an $\omega$-well-filtered space.
\end{enumerate}

	\vspace{2mm}
	
A summary on the relations among kinds of  well-filtered spaces is shown in Figure 2.

\begin{figure}[ht]
$$\xymatrix @R=2.6pc @C=2pc{
	\txt{\small sober}\ar@{->}[d]&& \\
	\txt{\small well-filtered}\ar@{<->}[r]\ar@{->}[d]
	&\txt{\small saturated\\ \small  well-filtered}\ar@{->}[r]\ar@{->}[d]
	&\txt{\small open\\ \small  well-filtered}\ar@{->}[d]\ar@{->}[ull]_{\txt{ \small\ \ \ \   core-compact}}\\
	\txt{\small saturated \\ \small $\omega$-well-filtered}\ar@{<->}[r]
	&\txt{\small$\omega$-well-filtered}\ar@{->}[r]
	&\txt{\small open\\ \small  $\omega$-well-filtered}\ar@{->}[rr]^{\txt{\small core-compact}}&
&\txt{ \small locally\\ \small compact}}
$$
\caption{The relations among various types of well-filtered spaces}
\end{figure}
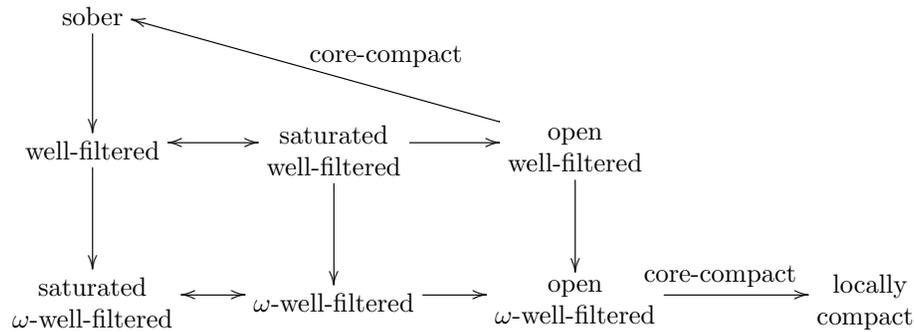

	\section{Acknowledgment}
	\noindent We would like to thank the reviewers for giving us valuable comments and suggestions for improving the manuscript.


\end{document}